\newtheorem{thm}{Theorem}[section]
\newtheorem{question}{Question}[section]
\newtheorem{cor}[thm]{Corollary}
\newtheorem{lem}[thm]{Lemma}
\newtheorem{prop}[thm]{Proposition}
\theoremstyle{definition}
\theoremstyle{remark}
\numberwithin{equation}{section}
\newcommand{\supp}{\mathrm{ supp }}
\newcommand{\vphi}{\varphi}
\newcommand{\wed}{\wedge}
\renewcommand{\[}{\begin{equation}}
\renewcommand{\]}{\end{equation}}
\def\PP{{\mathbb P}}
\def\cC{{\mathcal C}}
\def\cE{{\mathcal E}}
\def\cL{{\mathcal L}}
\def\cU{{\mathcal U}}
\def\psh{\mathrm{PSH}}
\begin{document}
	
\title[Equidistribution of non-pluripolar products]{Equidistribution of non-pluripolar products associated with quasi-plurisubharmonic functions of finite energy}%

\author{Taeyong Ahn}
\author{Ngoc Cuong Nguyen}%
\address{(Ahn) Department of Mathematics Education, Inha University, 100 Inha-ro, Michuhol-gu, Incheon 22212, Republic of Korea}%
\email{t.ahn@inha.ac.kr}

\address{(Nguyen) Faculty of Mathematics and Computer Science, Jagiellonian University 30-348 Krak\'ow, \L ojasiewicza 6, Poland}
\email{Nguyen.Ngoc.Cuong@im.uj.edu.pl}

\date{\today}
\thanks{The first author was supported by INHA UNIVERSITY Research Grant. A part of the work was done while the second author was a postdoc at Postech University, which was financially supported by the NRF Grant 2011-0030044 (SRC-GAIA) of The Republic of Korea. The second is also partially supported by NCN grant 2017/27/B/ST1/01145}%

\begin{abstract}
	In this article, we consider currents given by the $p$-fold non-pluripolar product associated with a quasi-plurisubharmonic function of finite energy, and prove that normalized pull-backs of such currents converge to the Green $(p, p)$-current exponentially fast in the sense of currents for holomorphic endomorphisms of $\PP^k$.
\end{abstract}
\maketitle

\section{Introduction}
In this article, we consider a holomorphic endomorphism $f:\PP^k\longrightarrow\PP^k$ of algebraic degree $\lambda\geq 2$. Let $\omega$ denote the Fubini-Study form of $\PP^k$ normalized so that $\int_{\PP^k}\omega^k=1$ and $T$ the Green current associated with $f$. Let $\cC_p$ denote the set of positive closed $(p, p)$-currents of unit mass on $\PP^k$.
\medskip

In the literature of the equidistribution of the inverse images of analytic subsets or more generally that of positive closed currents for holomorphic endomorphisms of $\PP^k$, the Lelong number has played a crucial rule. More precisely, for $S\in\cC_p$, we are interested in when the normalized pull-backs $S_n:=\lambda^{-p n}(f^n)^*S$ of $S$ converge to the Green $(p, p)$-current $T^p$ associated with $f$. In the case of bidegree $(1, 1)$, if the Lelong number of $S$ vanishes on $\PP^k$, then we have the desired convergence. (e.g. \cite{G}, \cite{DS08}) This property is not known for $p>1$. So, our primary question to study is the following:
\begin{question}\label{q:main}
	Assume that a current $S\in\cC_p$ has zero Lelong number on $\PP^k$. Then, can we show that
	\begin{displaymath}\notag
	\lambda^{-pn}(f^n)^*S\to T^p
	\end{displaymath}
	exponentially fast in the sense of currents?
\end{question}

In the case of $1<p<k$, it is not clear whether the theory of super-potentials works well with the Lelong number. So, we will rather return to use the pluripotential theory and related results. For $u\in\cE^1(\PP^k, \omega)$, the $p$-fold non-pluripolar product $\left<\omega_u^p\right>$ of $\omega_u:=\omega+dd^cu$ has zero Lelong number everywhere on $\PP^k$. We refer the reader to Section \ref{sec:prelim} for the definitions of $\cE^1(\PP^k, \omega)$ and the $p$-fold non-pluripolar product. In this aspect, it is natural to study the equidistribution of non-pluripolar products.
\medskip

The main purpose of this article is to investigate Question \ref{q:main} by proving the following theorem as a model case.  
\begin{thm}\label{thm:main}
	Let $u\in\cE^1(\PP^k, \omega)$. Then, we have for $1\leq p \leq k$,
	\[\notag
	\lambda^{-pn}(f^n)^*\left<\omega_u^p\right>\to T^p
	\]
	exponentially fast in the sense of currents, where $\omega_u:=\omega+dd^cu$ and $$ \left<\omega_u^p\right> = \left<\omega_u \wed \cdots \wed \omega_u\right>$$ 
	is the $p$-fold non-pluripolar product of $\omega_u$.
\end{thm}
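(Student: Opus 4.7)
The plan is to approximate $u\in\cE^1(\PP^k,\omega)$ from above by bounded $\omega$-plurisubharmonic functions, invoke exponential equidistribution for currents with bounded potentials on the truncations, and absorb the approximation error via the finite-energy hypothesis.

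Set $u_j:=\max(u,-j)$ for $j\geq 1$. Each $u_j$ is bounded and $\omega$-psh, the Monge--Amp\`ere power $\omega_{u_j}^p=\left<\omega_{u_j}^p\right>$ is the classical Bedford--Taylor current, and $\omega_{u_j}^p\to\left<\omega_u^p\right>$ weakly by the standard continuity of non-pluripolar products on $\cE^1(\PP^k,\omega)$. Since $\omega_{u_j}$ and $\omega_u$ coincide on the plurifine-open set $\{u>-j\}$, the difference $\left<\omega_u^p\right>-\omega_{u_j}^p$ is concentrated on $\{u\leq -j\}$; using the finite-energy integral
\[\notag
\int_{\PP^k}(-u)\left<\omega_u^p\right>\wed\omega^{k-p}<+\infty
\]
and Chebyshev's inequality, one obtains a quantitative approximation rate $\varepsilon(j)=O(1/j)$ for this difference tested against $\omega^{k-p}$, hence against any fixed smooth $(k-p,k-p)$-form.

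For each fixed $j$, the bounded potential $u_j$ places $\omega_{u_j}^p$ in the regime where exponential equidistribution is accessible; one applies, or proves as an auxiliary step, a bound of the form
\[\notag
\bigl|\bigl\langle\lambda^{-pn}(f^n)^*\omega_{u_j}^p-T^p,\Phi\bigr\rangle\bigr|\leq C(\Phi)\,j^{a}\,\alpha^n,\qquad 0<\alpha<1,\ a\geq 0,
\]
for any smooth test $(k-p,k-p)$-form $\Phi$, with polynomial dependence on $\|u_j\|_\infty\leq j$. Combining this with the previous paragraph through the triangle inequality, and using that $\lambda^{-pn}(f^n)^*$ preserves mass against $\omega^{k-p}$, gives
\[\notag
\bigl|\bigl\langle\lambda^{-pn}(f^n)^*\left<\omega_u^p\right>-T^p,\Phi\bigr\rangle\bigr|\leq C(\Phi)\bigl(\varepsilon(j)+j^{a}\alpha^n\bigr).
\]
Choosing $j=\beta n$ with $\beta>0$ sufficiently small forces both terms to decay exponentially in $n$.

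The principal obstacle is the compatibility of these two estimates: one needs the implicit constants in the bounded-potential equidistribution to grow at most polynomially in $\|u_j\|_\infty$, so that the modest $O(1/j)$ rate obtained from $\cE^1$-finiteness can be balanced against the dynamical rate $\alpha^n$. Secondary technical points are the functoriality of the non-pluripolar product under $\lambda^{-pn}(f^n)^*$ for $u\in\cE^1$, which is routine for bounded potentials and is inherited in the limit from the quantitative approximation, and the precise choice of a test-form class in which the exponential speed is measured; neither is expected to obstruct the argument, but both must be arranged carefully.
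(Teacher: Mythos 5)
There are genuine gaps here, and they sit exactly where the paper's actual work is done. First, your auxiliary bound $\bigl|\bigl\langle\lambda^{-pn}(f^n)^*\omega_{u_j}^p-T^p,\Phi\bigr\rangle\bigr|\leq C(\Phi)j^{a}\alpha^n$ is posited rather than proved, and it is essentially the whole content of the theorem. The paper does reduce to bounded truncations $\max\{u,-L\}$, but then proves the estimate with a constant that is \emph{uniform} in $\|u\|_\infty$: the telescoping/integration-by-parts scheme bounds everything by integrals of the form $\lambda^{-n}\int_{\PP^k}(-\tilde u)\,\omega_u^{i+j}\wed T^{k-i-j}$, which via the invariance $f^*T=\lambda T$, the measure pull-back identity, and the Cegrell-type inequality (Theorem~\ref{thm:cegrell-ineq}) are controlled by the energy $\int(-\tilde u)\omega_u^k$ alone --- a quantity uniformly bounded along the canonical truncation by Theorem~\ref{thm:BT-convergence}. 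Because the constant does not depend on the truncation level, the paper simply passes to the limit in $L$ for each fixed $n$ and never needs to balance $j$ against $n$. Your scheme, by contrast, stands or falls on producing the polynomial dependence on $\|u_j\|_\infty$, which you have not done.

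Second, the step transferring the $O(1/j)$ approximation error through the pull-back is unjustified. Writing $R_j:=\left<\omega_u^p\right>-\omega_{u_j}^p$, the fact that $\lambda^{-pn}(f^n)^*$ preserves the mass of positive closed currents only tells you that both pulled-back currents have unit mass; it gives no control on $\bigl\langle\lambda^{-pn}(f^n)^*R_j,\Phi\bigr\rangle$. Pairing against $\Phi$ amounts to pairing $R_j$ against $\lambda^{-pn}(f^n)_*\Phi$, which is dominated only by the singular current $\lambda^{-pn}(f^n)_*\omega^{k-p}$, not by $C\omega^{k-p}$; so the smallness of $\int_{\{u\leq-j\}}\left<\omega_u^p\right>\wed\omega^{k-p}$ does not propagate uniformly in $n$. (Quantitative continuity of $S\mapsto\lambda^{-pn}(f^n)^*S$ uniformly in $n$ is exactly the kind of super-potential statement that fails here: Proposition~\ref{prop:ex} of the paper shows these currents can have super-potential $-\infty$.) Finally, even granting both missing ingredients, your choice $j=\beta n$ makes $\varepsilon(j)=O(1/(\beta n))$, which is polynomial, not exponential, decay; you would need $j$ growing exponentially in $n$ (say $j=\gamma^n$ with $1<\gamma<\alpha^{-1/a}$) for the balancing to deliver an exponential rate.
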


In \cite{Ahn}, the first author studied currents in $\cC_p$ admitting super-potentials bounded near an invariant analytic subset for $f$. In general, the currents in Theorem \ref{thm:main} do not admit bounded super-potentials near the set. (cf. Proposition \ref{prop:ex}) In particular, the currents in Theorem \ref{thm:main} may not have bounded/continuous/H\"older continuous super-potentials.
\medskip

The key point in the proof is that positive closed $(1, 1)$-currents with H\"older continuous quasi-potentials and smooth quasi-potentials do not differ much for our estimate. More precisely, we need an estimate of the form
\begin{displaymath}\notag
\int_{\PP^k} u\circ f^n(f^n)^*\langle\omega_u^{p-1}\rangle\wedge \omega^{k-p+1}<C'.
\end{displaymath}
for some constant $C'>0$ independent of $n$. For this, from the invariance of the Green current, we first get an estimate of the form
\begin{displaymath}\notag
\int_{\PP^k} u\circ f^n(f^n)^*\langle\omega_u^{p-1}\rangle\wedge T^{k-p+1}<C
\end{displaymath}
for some constant $C>0$ independent of $n$ and next, we get the desired estimate from the $\cE^1$ condition and the H\"older continuity of the quasi-potential $G$ of $T$. (See Lemma \ref{lem:induction-step} and Corollary \ref{cor:bound-final})
\medskip

	It is also possible to prove the theorem by estimating the volume of sub-level sets of quasi-plurisubharmonic functions as in the case of bidegree $(1, 1)$. In this case, the integrability condition plays the role of the Lelong number vanishing condition. For the simplicity of the presentation, we do not include it here.
\medskip

	Our method does not rely on specific properties of $\PP^k$ and may be used in more general compact K\"ahler manifold settings.

\section{Preliminaries}\label{sec:prelim}

An upper semicontinuous function $u\in\cL^1(\PP^k)$ is said to be $\omega$-pluri-subharmonic ($\omega$-psh) if the current defined by $\omega_u:=\omega+dd^cu$ is positive. We denote by $\psh(\PP^k, \omega)$ the set of $\omega$-psh functions on $\PP^k$.

Note that for any positive closed $(1, 1)$-current $S$ of unit mass (equivalently, cohomologous to $\omega$) on $\PP^k$, there exists a $\psi\in\psh(\PP^k, \omega)$ such that $S=\omega+dd^c\psi$ and that the pull-back operator $f^*$ is well defined for positive closed currents. So, we can write
\[\notag
\frac{1}{\lambda} f^*\omega = \omega + dd^c \vphi,
\]
for some $\vphi\in\psh(\PP^k, \omega)$. By adding a constant, we can normalize $\vphi$ in such a way that $\sup_{\PP^k} \vphi =0$.
It is well-known that
\[\notag
\lim_{N \to +\infty} \sum_{i=0}^N \frac{1}{\lambda^i} \; \vphi\circ f^i \rightarrow G
\]
uniformly to a H\"older continuous $G\in\psh(\PP^k, \omega)$, where $f^i = f\circ \cdots \circ f$ means the $i$-th iterate of $f$. Observe that $G\leq 0$.
The Green current $T$ is defined by
\[\notag
T =\omega_G = \omega + dd^c G
\]
and satisfies $f^*T=\lambda T$.

Now, we briefly recall the notion of non-pluripolar product. 
Given $\psi\in\psh(\PP^k, \omega)$, let $\psi_j:=\max\{\psi, -j\}\in \psh(\PP^k, \omega)$. Define a measure by
\begin{displaymath}\notag
\mu_\psi:=\lim_{j\to +\infty}\mathbf{1}_{\{\psi>-j\}}(\omega+dd^c\psi_j)^k.
\end{displaymath}
The following class was introduced in Section 1 in \cite{GZ07}.
	\begin{displaymath}\notag
	\cE(\PP^k, \omega)=\left\{\psi\in\psh(\PP^k, \omega): \mu_\psi(\PP^k)=\int_{\PP^k}\omega^k \right\}.
	\end{displaymath}
	Equivalently, one can define $\psi\in\cE(\PP^k, \omega)$ if and only if $(\omega+dd^c\psi_j)^k(\{\psi\leq -j\})\to 0$ as $j\to\infty$.
In this paper, for a technical reason, we consider the following subclass:
\begin{displaymath}\notag
\cE^1(\PP^k, \omega):=\{\psi\in\cL^1(\langle\omega_\psi^k\rangle)\}\cap\cE(\PP^k, \omega).
\end{displaymath}
For $\psi\in\psh(\PP^k, \omega)$, the $p$-fold non-pluripolar product denoted by $\langle\omega_\psi^p\rangle$ is defined to be
	\begin{align*}
	\langle \langle\omega_\psi^p\rangle, R\rangle=\lim_{j\to\infty}\langle \omega_{\psi_j}^p, R\rangle
	\end{align*}
	for a smooth test form $R$ of bidegree $(k-p, k-p)$.
Here, since $\psi_j$ is bounded, $\omega_{\psi_j}^p$ is well defined in the sense of Bedford-Taylor. In particular, if the quasi-potential $\psi$ is bounded, then the non-pluripolar product coincides with the wedge product in the sense of Bedford-Taylor.

We will need the following properties of the class $\cE(\PP^k,\omega)$ introduced by Guedj-Zeriahi \cite[Theorem~2.6, Propostion~2.10]{GZ07} which generalised Cegrell's works in the local setting. 

\begin{thm} 
	\label{thm:BT-convergence} 
	Let $\psi \in \cE(\PP^k,\omega)$ and $\psi_j := \max\{\psi, -j\}$ be the canonical approximation of $\psi$. Then, for all Borel sets $B\subseteq \PP^k$, 
	$${\bf 1}_B (- |\psi_j|)(\omega + dd^c \psi_{j})^k \rightarrow {\bf 1}_B (- |\psi|) (\omega+ dd^c \psi)^k$$ 
	in the weak sense of measures.
\end{thm}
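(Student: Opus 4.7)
The plan is to exploit the plurifine locality of the Bedford--Taylor Monge--Amp\`ere operator, which makes $(\omega + dd^c \psi_j)^k$ essentially stationary in $j$ on the sublevel sets $\{\psi > -M\}$, and then to treat the complementary tail by the defining property of $\cE(\PP^k,\omega)$ together with a finite-energy estimate. Normalize so that $\psi \leq 0$, hence $|\psi_j| = -\psi_j$, and test against an arbitrary continuous $\chi$ on $\PP^k$.

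The stationarity step goes as follows. Since $\{\psi > -M\}$ is plurifine open and $\psi_j = \psi_M = \psi$ on it for every $j \geq M$, plurifine locality applied to the bounded $\omega$-psh functions $\psi_j,\psi_M$ yields
\[\notag
\mathbf{1}_{\{\psi > -M\}}(\omega + dd^c \psi_j)^k = \mathbf{1}_{\{\psi > -M\}}(\omega + dd^c \psi_M)^k.
\]
The right side is independent of $j$, and the increasing limit $\mathbf{1}_{\{\psi > -j\}}(\omega + dd^c \psi_j)^k \nearrow \mu_\psi$ identifies its value with $\mathbf{1}_{\{\psi > -M\}}\mu_\psi$. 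Combined with $\psi_j = \psi$ on this set, this produces the pointwise equality
\[\notag
\mathbf{1}_{B \cap \{\psi > -M\}}(-|\psi_j|)(\omega + dd^c \psi_j)^k = \mathbf{1}_{B \cap \{\psi > -M\}}(-|\psi|)\mu_\psi
\]
for all $j \geq M$ and every Borel $B$; the contribution on this piece therefore converges trivially (it is stationary).

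For the tail I would split $\{\psi \leq -M\} = \{-j < \psi \leq -M\} \cup \{\psi \leq -j\}$. The first piece lies in $\{\psi > -j\}$, so the identity above still applies and the contribution equals $\int_{B \cap \{-j < \psi \leq -M\}}(-|\psi|)\, d\mu_\psi$, whose total mass is dominated by $\int_{\{\psi \leq -M\}}|\psi|\, d\mu_\psi$, which vanishes as $M \to \infty$ because $\mu_\psi$ does not charge $\{\psi = -\infty\}$ and $\psi \in L^1(\mu_\psi)$ in the $\cE^1$ regime used throughout the paper. On the second piece $\psi_j \equiv -j$, so the contribution is $-j \cdot (\omega + dd^c \psi_j)^k(B \cap \{\psi \leq -j\})$, and it remains to show $j \cdot (\omega + dd^c \psi_j)^k(\{\psi \leq -j\}) \to 0$. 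This will follow from the identity
\[\notag
\int_{\PP^k}|\psi_j|(\omega + dd^c \psi_j)^k = \int_{\{\psi > -j\}}|\psi|\, d\mu_\psi + j\cdot (\omega + dd^c \psi_j)^k(\{\psi \leq -j\}),
\]
furnished by the stationarity step: the left side is uniformly bounded by the classical finite-energy estimate on $\cE^1$, while the first term on the right increases to the finite value $\int|\psi|\, d\mu_\psi$, forcing the last term to zero.

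The main obstacle is securing the uniform bound $\sup_j \int |\psi_j|(\omega + dd^c \psi_j)^k < \infty$ used in the last step. It rests on the inductive integration-by-parts scheme that expands $(\omega + dd^c \psi_j)^k$ and reduces the Monge--Amp\`ere energy of $-\psi_j$ against it to that against $\omega^k$ modulo nonnegative correction terms; each integration by parts must be performed on the bounded canonical approximants $\psi_j$ before passing to the limit, and the $\cE^1$ assumption is precisely what keeps the resulting bounds uniform in $j$.
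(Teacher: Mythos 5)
The paper does not actually prove this statement: it is quoted from Guedj--Zeriahi \cite[Theorem~2.6, Proposition~2.10]{GZ07}, so there is no in-paper argument to compare against. Your strategy --- plurifine locality to make $(\omega+dd^c\psi_j)^k$ stationary on $\{\psi>-M\}$, identification of the stationary value with $\mathbf{1}_{\{\psi>-M\}}\mu_\psi$ via the increasing limit defining $\mu_\psi$, and a two-piece tail estimate --- is exactly the standard Guedj--Zeriahi argument, and the stationarity step and the decomposition of the error are correct as written.

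There is, however, a genuine gap in your final step. From the identity
\begin{displaymath}
\int_{\PP^k}|\psi_j|(\omega+dd^c\psi_j)^k \;=\; \int_{\{\psi>-j\}}|\psi|\,d\mu_\psi \;+\; j\,(\omega+dd^c\psi_j)^k(\{\psi\le-j\}),
\end{displaymath}
you conclude that the last term tends to $0$ because the left-hand side is uniformly bounded while the first term on the right increases to $\int|\psi|\,d\mu_\psi$. That is a non sequitur: uniform boundedness of the left-hand side together with convergence of the first right-hand term only shows that the last term stays bounded; nothing you have written excludes, say, $\int|\psi_j|(\omega+dd^c\psi_j)^k\to\int|\psi|\,d\mu_\psi+c$ with $c>0$, in which case the tail term would converge to $c$ rather than to $0$. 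Closing the gap would require $\int|\psi_j|(\omega+dd^c\psi_j)^k\to\int|\psi|\,d\mu_\psi$, which is essentially the statement being proved. The repair is cheap and uses only what you already have: since $\mathbf{1}_{\{\psi>-j\}}(\omega+dd^c\psi_j)^k=\mathbf{1}_{\{\psi>-j\}}\mu_\psi$, and since $(\omega+dd^c\psi_j)^k$ and $\mu_\psi$ have the same total mass $\int_{\PP^k}\omega^k$ (the former for cohomological reasons, the latter by the very definition of $\cE(\PP^k,\omega)$), one gets $(\omega+dd^c\psi_j)^k(\{\psi\le-j\})=\mu_\psi(\{\psi\le-j\})$; as $\mu_\psi$ charges no pluripolar set, Chebyshev gives $j\,\mu_\psi(\{\psi\le-j\})\le\int_{\{\psi\le-j\}}|\psi|\,d\mu_\psi\to0$ once $\psi\in\cL^1(\mu_\psi)$. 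This also makes your closing paragraph on the uniform energy bound via integration by parts unnecessary. Finally, note that your argument (and indeed any reading of the conclusion as weak convergence of finite measures) requires $\psi\in\cL^1(\mu_\psi)$, i.e.\ $\psi\in\cE^1(\PP^k,\omega)$, whereas the statement is phrased for $\psi\in\cE(\PP^k,\omega)$; this is a defect of the statement rather than of your proof, and $\cE^1$ is the only regime in which the paper uses the result.
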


\begin{thm} 
	\label{thm:cegrell-ineq}
	There exist a universal constant $C>0$ such that for all $0\geq \psi_0, ..., \psi_k \subset \psh(\PP^k,\omega) \cap \cL^\infty(\PP^k)$,
	\[\notag
	0\leq \int_{\PP^k} (-\psi_0) \omega_{\psi_1} \wed \cdots \wed \omega_{\psi_k} \leq C \max_{0\leq j \leq k} \left(\int_X (-\psi_j) \omega_{\psi_j}^k\right)
	\]
\end{thm}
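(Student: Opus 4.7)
The plan is to induct on the number $N$ of distinct functions among $\psi_0,\ldots,\psi_k$, using Stokes' theorem together with a Cauchy--Schwarz inequality for the symmetric positive semi-definite bilinear form $(u,v)\mapsto \int du\wed d^c v\wed T$, valid for any positive closed $(k-1,k-1)$-current $T$. Set $E(\psi):=\int_{\PP^k}(-\psi)\,\omega_\psi^k$. The key auxiliary estimate is: for bounded $\omega$-psh $u,v\leq 0$ and such $T$,
$$\int(-u)\,dd^c v\wed T \leq \Bigl(\int (-u)\,\omega_u\wed T\Bigr)^{1/2}\Bigl(\int (-v)\,\omega_v\wed T\Bigr)^{1/2}.$$
This follows from Stokes ($\int(-u)\,dd^c v\wed T=\int du\wed d^c v\wed T$), Cauchy--Schwarz applied to the semi-definite form, and $\int du\wed d^c u\wed T=\int(-u)(\omega_u-\omega)\wed T\leq \int(-u)\,\omega_u\wed T$ (since $-u\geq 0$ and $\omega\wed T\geq 0$). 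The same Stokes identity, specialized to $u=v$, gives the useful byproduct $\int(-u)\,\omega\wed T\leq \int(-u)\,\omega_u\wed T$.

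For the base case $N=1$ all $\psi_j$ coincide and $I=E(\psi_0)$, so the inequality is trivial with $C=1$. For the inductive step with $N\geq 2$, pick an index $i\geq 1$ with $\psi_i\neq \psi_0$; after relabeling with $i=1$, set $T:=\omega_{\psi_2}\wed\cdots\wed\omega_{\psi_k}$, expand $\omega_{\psi_1}=\omega+dd^c\psi_1$, and apply the auxiliary estimate to the cross term to obtain
$$I \leq \int(-\psi_0)\,\omega\wed T + \Bigl(\int(-\psi_0)\,\omega_{\psi_0}\wed T\Bigr)^{1/2}\Bigl(\int(-\psi_1)\,\omega_{\psi_1}\wed T\Bigr)^{1/2}.$$
Using $\int(-\psi_0)\,\omega\wed T\leq \int(-\psi_0)\,\omega_{\psi_0}\wed T$ on the first summand, both remaining integrals involve only the $N-1$ distinct values $\{\psi_0,\psi_2,\ldots,\psi_k\}$ or $\{\psi_1,\psi_2,\ldots,\psi_k\}$ (the ``unmatched'' function $\psi_1$ has been eliminated). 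By the induction hypothesis each is at most $C_{N-1}\max_j E(\psi_j)$, and the inequality $\sqrt{ab}\leq \max(a,b)$ gives $I\leq 2C_{N-1}\max_j E(\psi_j)$. Hence $C_N\leq 2C_{N-1}$, and since $N\leq k+1$ we arrive at $C\leq 2^{k+1}$.

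The main technical obstacle is that in the inductive step the current $T$ is not smooth but a Bedford--Taylor product $\omega_{\psi_2}\wed\cdots\wed\omega_{\psi_k}$ of positive closed $(1,1)$-currents, well-defined only because the $\psi_i$ are assumed bounded. One must therefore justify Stokes and the Cauchy--Schwarz/positivity arguments against such a current; this is standard by replacing each $\psi_i$ by a smooth decreasing approximation and invoking the weak continuity of the Bedford--Taylor operator along monotone sequences, so no obstacle in principle. The positive semi-definiteness of $(u,v)\mapsto \int du\wed d^c v\wed T$ itself follows from the pointwise positivity of $d(u-\lambda v)\wed d^c(u-\lambda v)\wed T$ for every $\lambda\in\RR$, valid because $d u'\wed d^c u'$ is a positive $(1,1)$-form for any real $u'$ and $T$ is a positive closed current of complementary bidegree.
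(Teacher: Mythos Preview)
The paper does not give its own proof of this statement; it is quoted from Guedj--Zeriahi \cite[Proposition~2.10]{GZ07} (adapting Cegrell's local argument), so there is no paper proof to compare against. Your auxiliary inequality
\[\notag
\int(-u)\,dd^c v\wed T \ \leq\ \Bigl(\int (-u)\,\omega_u\wed T\Bigr)^{1/2}\Bigl(\int (-v)\,\omega_v\wed T\Bigr)^{1/2}
\]
and the companion $\int(-u)\,\omega\wed T\leq \int(-u)\,\omega_u\wed T$ are correct and are exactly the engine behind the standard proof. The problem is the induction scheme.

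Your claim that each of the two integrals produced by the split involves only $N-1$ distinct functions is false in general. You chose $i\geq 1$ with $\psi_i\neq\psi_0$, but nothing prevents $\psi_i$ from coinciding with some $\psi_j$ for $j\geq 2$; then $\{\psi_0,\psi_2,\ldots,\psi_k\}$ still has $N$ distinct values. Concretely, take $k=2$, $\psi_0=a$, $\psi_1=\psi_2=b$ with $a\neq b$, so $N=2$. Splitting at either position yields the child $I(a;a,b)$, which again has two distinct functions. Iterating, $I(a;a,b)$ produces $E(a)$ and $I(b;a,b)=I(b;b,a)$, and the latter produces $E(b)$ and $I(a;b,a)=I(a;a,b)$ once more: the process cycles and $N$ never drops to $1$ on every branch. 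The same phenomenon occurs for $k=3$ with $(\psi_0,\psi_1,\psi_2,\psi_3)=(a,a,b,b)$, where no choice of $i$ decreases $N$ for \emph{both} children.

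The fix is not to induct on $N$ but to exploit your recursion $I(u;v,T)\leq I(u;u,T)+\sqrt{I(u;u,T)\,I(v;v,T)}$ as a closed system among the finitely many integrals $I(\phi_0;\phi_1,\ldots,\phi_k)$ with each $\phi_j\in\{\psi_0,\ldots,\psi_k\}$, and bootstrap: each such integral is \emph{a priori} finite (the $\psi_j$ are bounded), and the recursion then forces a uniform bound by $C\max_j E(\psi_j)$. In the $k=2$ example above, setting $x=I(a;a,b)$, $y=I(b;b,a)$, $M=\max(E(a),E(b))$, your inequalities give $x\leq M+\sqrt{My}$ and $y\leq M+\sqrt{Mx}$, whence $\max(x,y)\leq \tfrac{3+\sqrt 5}{2}M$. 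Cegrell and Guedj--Zeriahi organise this instead as an iterated Cauchy--Schwarz yielding a H\"older-type bound with explicit exponents, but the content is the same; only your termination argument needs repair.
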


We show that the non-pluripolar product operator commutes with the pull-back operator $f^*$. 
\begin{lem}  We have for $1\leq p \leq k$,
\[\notag
	 f^{*} \left< \omega_u^p \right> = \left< (f^*\omega + dd^c u\circ f)^p\right>.
\]
\end{lem}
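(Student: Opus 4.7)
The strategy is to reduce to the bounded setting by the canonical approximation $u_j := \max\{u,-j\}$ and then pass to the limit $j\to\infty$.

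First, since $u_j$ is bounded and $\omega$-psh, the wedge product $\omega_{u_j}^p$ is defined in the classical Bedford--Taylor sense. Because $f$ is a holomorphic endomorphism, pull-back commutes with the Bedford--Taylor product on locally bounded quasi-psh potentials, so
\[
f^{*}\bigl(\omega_{u_j}^p\bigr) \;=\; \bigl(f^{*}\omega_{u_j}\bigr)^p \;=\; \bigl(f^{*}\omega + dd^c(u_j\circ f)\bigr)^p.
\]
The key observation is that $u_j\circ f = \max\{u\circ f,-j\}$: the canonical truncation of the $(f^{*}\omega)$-psh function $u\circ f$ coincides with the composition of $u_j$ with $f$. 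Thus both sides of the displayed identity are Bedford--Taylor powers associated with canonical approximations, one with reference form $\omega$ and the other with reference form $f^{*}\omega$.

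Next, pair both sides with an arbitrary smooth test form $R$ of bidegree $(k-p,k-p)$. Applying the definition of the non-pluripolar product recalled in the excerpt, but with reference $f^{*}\omega$, one gets
\[
\lim_{j\to\infty}\bigl\langle (f^{*}\omega + dd^c(u_j\circ f))^p,\,R\bigr\rangle \;=\; \bigl\langle \langle(f^{*}\omega + dd^c(u\circ f))^p\rangle,\,R\bigr\rangle.
\]
On the other hand, by the definition of $\langle\omega_u^p\rangle$ and the fact that $f^{*}$ is continuous under weak convergence of positive closed currents on $\PP^k$,
\[
\lim_{j\to\infty}\bigl\langle f^{*}(\omega_{u_j}^p),\,R\bigr\rangle \;=\; \bigl\langle f^{*}\langle\omega_u^p\rangle,\,R\bigr\rangle.
\]
Since $f^{*}(\omega_{u_j}^p) = (f^{*}\omega + dd^c(u_j\circ f))^p$ for every $j$, the two limits coincide, which yields the identity.

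\textbf{Main obstacle.} The principal technical point is justifying the commutation of $f^{*}$ with the Bedford--Taylor product at the level of bounded potentials, together with the weak continuity of $f^{*}$ on positive closed currents. Both facts are standard for holomorphic endomorphisms of $\PP^k$; the cleanest way is to work locally via potentials, writing $\omega = dd^c\rho$ in a chart and using $f^{*}dd^c(\rho+u_j) = dd^c((\rho+u_j)\circ f)$ together with the continuity of Bedford--Taylor products along decreasing sequences of locally bounded plurisubharmonic functions. Once this is in place, the truncation argument above gives the lemma without further effort.
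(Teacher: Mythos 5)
Your proposal is correct and follows essentially the same route as the paper: truncate $u$ to the bounded case, use the weak continuity of $f^*$ on positive closed currents (the paper cites Dinh--Sibony for this) to pass the pull-back through the defining limit of $\left<\omega_u^p\right>$, commute $f^*$ with the Bedford--Taylor power for bounded potentials, and identify $\max\{u,-j\}\circ f=\max\{u\circ f,-j\}$ so that the resulting limit is the non-pluripolar product with reference form $f^*\omega$. Your version merely makes explicit the pairing with a test form and the local-potential justification of the commutation step, both of which the paper leaves implicit.
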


\begin{proof} 
Since $f$ is an endomorphism of $\PP^k$, 
$
	f^*\omega + dd^c u\circ f 
$ is a closed positive $(1,1)$-current. Hence, it follows from Boucksom-Eyssidieux-Guedj-Zeriahi \cite{BEGZ10} that  the left hand side is a well-defined closed positive $(p,p)$-current. By Dinh-Sibony \cite{DS07}, the pull-back operator is continuous:
\[\notag\begin{aligned}
	f^*\left< \omega_u^p \right> 
&= 	\lim_{L \to +\infty} f^* (\omega + dd^c \max\{u, -L\})^p \\
&=	\lim_{L \to +\infty} \left( f^*(\omega + dd^c \max\{u,-L\} \right)^p\\
&=	\lim_{L \to +\infty} \left( f^*\omega + dd^c \max\{u\circ f,-L\} \right)^p
\end{aligned}\]
Thus, we have finished the proof.
\end{proof}

In general, the non-pluripolar products do not admit bounded super-potentials. 
More precise statement is as follows.
\begin{prop}\label{prop:ex}
	For any analytic subset $H\subseteq \PP^k$ of pure codimension $1<l<k$, there exists a $\varphi\in\cE^1(\PP^k, \omega)$ such that  the super-potentials of $\langle\omega_\varphi^{k-l+1}\rangle$ have as their value $-\infty$ at $[H]$.
\end{prop}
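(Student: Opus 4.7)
The strategy is to exhibit $\varphi\in\cE^1(\PP^k,\omega)$ with a sub-logarithmic singularity along a hypersurface $D$ containing $H$, and to show via integration by parts on the canonical approximants that the super-potential of $R:=\langle\omega_\varphi^{k-l+1}\rangle$ at $[H]$ equals $-\infty$.

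Since $\mathrm{codim}\,H=l\geq 2$, one can pick an irreducible hypersurface $D\subseteq\PP^k$ of some degree $d$ with $H\subseteq D$. Fix a defining section $s$ of $\mathcal{O}(d)$ with a smooth Hermitian metric $h$, set $\psi:=A-\log|s|_h^2$ with $A$ chosen so $\psi\geq 1$, and define
\[\notag
\varphi:=-\tfrac{1}{C}\psi^\alpha+b
\]
for small $0<\alpha<1/2$, large $C>0$, and $b$ normalizing $\sup_{\PP^k}\varphi=0$. From the formula
\[\notag
-dd^c\psi^\alpha=\alpha(1-\alpha)\psi^{\alpha-2}\,d\psi\wed d^c\psi+\alpha\psi^{\alpha-1}[D]-\alpha\psi^{\alpha-1}\theta_h
\]
(with $\theta_h$ the smooth Chern curvature of $h$), the first two terms are nonnegative (the distributional $[D]$-contribution vanishes since $\psi^{\alpha-1}\to 0$ on $D$), and the third is smooth with uniform norm $\leq\alpha\sup|\theta_h|_\omega$. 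Hence for $C$ large enough $\varphi\in\psh(\PP^k,\omega)$, has zero Lelong number, $\varphi\equiv-\infty$ on $D\supseteq H$, and a direct radial estimate on $\int(-\varphi)\langle\omega_\varphi^k\rangle$ shows $\varphi\in\cE^1(\PP^k,\omega)$ for $\alpha<1/2$.

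For the bounded approximants $\varphi_j:=\max\{\varphi,-j\}$, the Bedford-Taylor telescoping identity gives
\[\notag
\omega_{\varphi_j}^{k-l+1}-\omega^{k-l+1}=dd^c(\varphi_j V_j),\qquad V_j:=\sum_{i=0}^{k-l}\omega^i\wed\omega_{\varphi_j}^{k-l-i}.
\]
Pairing with a normalized $(l-1,l-1)$-quasi-potential $U_H$ of $[H]$ (so $dd^c U_H=[H]-\omega^l$) and integrating by parts yields
\[\notag
\cU_{\omega_{\varphi_j}^{k-l+1}}([H])=\int_{\PP^k}\varphi_j V_j\wed[H]-\int_{\PP^k}\varphi_j V_j\wed\omega^l.
\]
Because $\varphi\equiv-\infty$ on $H$, $\varphi_j|_H\equiv-j$; and $V_j\wed[H]$ is a Bedford-Taylor intersection of a current with continuous potentials against an analytic cycle, with total mass equal to the cohomological number $(k-l+1)\deg(H)>0$ for every $j$. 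Hence the first integral equals $-j(k-l+1)\deg(H)\to-\infty$, while the second is uniformly bounded in $j$ by Theorem \ref{thm:cegrell-ineq} applied to each summand (using $\varphi\in\cE^1$).

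The main obstacle is the passage to the limit: one must conclude $\cU_R([H])=-\infty$ (not merely $\cU_{\omega_{\varphi_j}^{k-l+1}}([H])\to-\infty$). This is achieved by showing $\varphi_j V_j\wed[H]\searrow\varphi V\wed[H]$ in the sense of measures, using Theorem \ref{thm:BT-convergence} for convergence $V_j\to V:=\sum_{i=0}^{k-l}\omega^i\wed\langle\omega_\varphi^{k-l-i}\rangle$, the monotonicity $\varphi_j\searrow\varphi$, and the $j$-independence of the cohomological mass of $V_j\wed[H]$; combined with the uniform bound on the second integral, the super-potential identity extends to $R$, giving $\cU_R([H])=-\infty$.
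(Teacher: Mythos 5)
Your construction of $\varphi$ (a sub-logarithmic singularity $-\frac1C(A-\log|s|_h^2)^\alpha$ along a hypersurface $D\supseteq H$, with $\alpha<1/2$ for the energy) is a legitimate variant of the paper's choice, which instead takes $h=\frac1D\log\bigl(\sum_i|g_i|^{D/d_i}/\|z\|^D\bigr)$ and $\varphi=-(-h)^s$, singular exactly on $H$; either produces a $\varphi\in\cE^1(\PP^k,\omega)$ with $\varphi\equiv-\infty$ on $H$, and your telescoping computation correctly shows $\cU_{\omega_{\varphi_j}^{k-l+1}}([H])\le -j(k-l+1)+O(1)\to-\infty$ for the bounded approximants.

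The genuine gap is the final limit interchange, which you yourself flag as ``the main obstacle'': knowing $\cU_{R_j}([H])\to-\infty$ for $R_j=\omega_{\varphi_j}^{k-l+1}\to R=\langle\omega_\varphi^{k-l+1}\rangle$ does \emph{not} give $\cU_R([H])=-\infty$. The general semicontinuity of super-potentials under weak convergence of the currents goes the wrong way for you: one has (at best) $\limsup_j\cU_{R_j}(S)\le\cU_R(S)$, the analogue of $\limsup_j u_j\le u$ for psh functions converging in $\cL^1$, so your conclusion reads $-\infty\le\cU_R([H])$, which is vacuous. Your proposed repair does not close this: Theorem \ref{thm:BT-convergence} concerns the weighted top-degree Monge--Amp\`ere measures of a single $\psi$ on Borel sets and says nothing about the mixed products $V_j\wed[H]$; and the asserted limit ``$\varphi_jV_j\wed[H]\searrow\varphi V\wed[H]$ in the sense of measures'' is ill-posed, since $\varphi\equiv-\infty$ on $H$ while any weak limit of $V_j\wed[H]$ is a positive measure of mass $(k-l+1)>0$ carried by $H$, so the putative limit is not a measure (and $V\wed[H]$ is not a Bedford--Taylor product, the potentials of $V$ being $-\infty$ on $H$). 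The paper avoids the problem by regularizing the \emph{test} current rather than $R$: it keeps $R=\langle\omega_\varphi^{k-l+1}\rangle$ fixed, uses the Dinh--Sibony continuity of the super-potential of a fixed current along the structural regularization $[H]_\theta\to[H]$ (Corollary 3.17 in \cite{DS09}) to write $\cU_R([H])=\lim_{\theta\to0}\cU_R([H]_\theta)$, evaluates the explicit potential $\varphi\sum_i\omega^i\wed\langle\omega_\varphi^{k-l-i}\rangle$ against the smooth forms $[H]_\theta$, and concludes from $\supp[H]_\theta\subseteq\{\varphi<-M\}$ together with the cohomological mass count. You should reorganize your limit step along these lines (fix $R$, smooth $[H]$), as the $j$-limit cannot be salvaged by monotonicity.
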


\begin{proof}
	Due to Chow's theorem, let $\displaystyle H=\cap_{i=1}^j\{g_i=0\}$ for some homogeneous polynomials $g_i$ of degree $d_i$. Let $D:=d_1\cdots d_j$. Let $h$ be the $\omega$-psh function defined by 
	$$ h:=\frac{1}{D}\log\left|\frac{\sum_{i=1}^{j}|g_i|^{D/d_i}}{\|z\|^{D}}\right|\in\psh(\PP^k, \omega).$$
	By subtracting a constant, we may assume that $h\leq-1$. Due to Example 2.14 in \cite{GZ07} and Theorem \ref{thm:cegrell-ineq}, one can find a very small $s>0$ such that $\varphi:=-(-h)^s$ satisfies that its complex Monge-Amp\`ere operator $\langle\omega_\varphi^k\rangle$ is well defined and $\varphi\in \cL^1(\langle\omega_\varphi^k\rangle)$. Also, for every $1\leq i\leq k$, $\langle\omega_\varphi^{i}\rangle$ is well defined. 
	
	In the proof, we consider super-potentials of mean $0$. Note that the super-potentials on $\PP^k$ are everywhere well defined once we admit $-\infty$ as its value. 
%
	We consider $\langle\omega_\varphi^{k-l+1}\rangle$. Due to the continuity of super-potentials on $\PP^k$ with respect to the averaging, we have
	\begin{displaymath}
	\cU_{\langle\omega_\varphi^{k-l+1}\rangle}([H])=\lim_{\theta\to 0}\cU_{\langle\omega_\varphi^{k-l+1}\rangle}([H]_\theta)
	\end{displaymath}
	where $[H]_\theta$ is the regularization of $[H]$ by averaging in Section 2 in \cite{DS09}. For the above limit, see Corollary 3.17 in \cite{DS09}. Since $[H]_\theta$ is smooth, one can write
	\begin{displaymath}
	\cU_{\langle\omega_\varphi^{k-l+1}\rangle}([H]_\theta)=\langle\varphi\sum_{i=0}^{k-l}\omega^i\wedge\langle\omega_\varphi^{k-l-i}\rangle, [H]_\theta \rangle + c
	\end{displaymath}
	where $c>0$ is a constant independent of $\theta$. Let $M>0$ be an arbitrary large positive number. Then, for all sufficiently small $\theta>0$, we have $\supp [H]_\theta\subseteq \{\varphi<-M\}$. Then, since the mass of each $\omega^i\wedge\langle\omega_\varphi^{k-l-i}\rangle\wedge[H]_\theta$ is $1$, we have
	\begin{displaymath}
	\cU_{\langle\omega_\varphi^{k-l+1}\rangle}([H])=\lim_{\theta\to 0}\cU_{\langle\omega_\varphi^{k-l+1}\rangle}([H]_\theta)<-(k-l+1)M+c.
	\end{displaymath}
	Since this is true for arbitrary $M>0$, the super-potential of $\langle \omega_\varphi^{k-l+1}\rangle$ of mean $0$ is $-\infty$ at $[H]$.
\end{proof}

\section{Proof of Theorem~\ref{thm:main}}

In this section we will prove Theorem~\ref{thm:main}. More precisely, we are going to prove that for a smooth test form $\psi$ of bidegree $(k-p, k-p)$, there exists a constant $C>0$ independent of $\psi$ and $n$ such that
\[\notag\label{eq:exponential-convergence}
	\int_{\PP^k} \left(\lambda^{-pn} (f^n)^* \left<\omega_u^p\right> - T^p\right)\wed \psi \leq C\|\psi\|_{\cC^2} \lambda^\frac{-n}{2^k}.
\]
Using properties of the non-pluripolar product we have
\[\notag\begin{aligned}
&	\int_{\PP^k} \left(\lambda^{-pn} (f^n)^* \left<\omega_u^p\right> - T^p\right)\wed \psi \\
&=	\lim_{L\to +\infty} \int_{\PP^k} \left[\lambda^{-pn} (f^n)^* (\omega + dd^c \max\{u, -L\})^p - T^p\right]\wed \psi.
\end{aligned}\]
Therefore,  we may assume that $u$ is bounded and prove a uniform estimate with respect to $\|u\|_\infty$ of the integral on the right hand side.
\bigskip

Now we make preparations to achieve the goal. Let $u\in\psh(\PP^k, \omega)$ and bounded such that $\sup_{\PP^k}u=0$. Denote
\[\notag
	\tilde u := u -G - \sup_{\PP^k} (u-G).
\]
It follows from the continuity of the Green quasi-potential $G$ that for some $C_0>0$, we have
\begin{equation}\notag
	u - C_0 < \tilde  u < u+ C_0.
\end{equation}
We also have
\[ \label{eq:pull-back-S}
\begin{aligned}
	\frac{1}{\lambda} f^* \omega_u 
&= 	\frac{1}{\lambda} f^*\omega + \frac{1}{\lambda} dd^c u\circ f \\
&=	T + \frac{1}{\lambda} dd^c \tilde u \circ f. 
\end{aligned}\]
Inductively, using the fact that $(1/\lambda)  f^*T =T$,
\[\notag
	\frac{1}{\lambda^n} (f^n)^* \omega_u = T +\frac{1}{\lambda^n} dd^c \tilde u \circ f^n.
\]
For simplicity we write 
\[\notag
	v_n:= \tilde u\circ f^n, \quad S_n:= \frac{1}{\lambda^n} (f^n)^* \omega_u.
\]
Notice that from our running assumption, $v_n$ is also bounded.
The following proposition is the key estimate. 
\begin{prop}
\label{prop:each-term} 
Let  $1\leq i \leq p$. Then, there exists a uniform constant $C>0$ such that
\[\notag
	\int_{\PP^k} \frac{(-v_n)}{\lambda^n}  \cdot  S_n^{i-1}\wed T^{p-i} \wedge \omega^{k-p+1}  \leq C \left(\frac{1}{\lambda^n}\right)^\frac{1}{2^{k-p+1}}
\]
for every $n\geq 1$.
\end{prop}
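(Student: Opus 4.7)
The plan is to prove, by induction on the exponent $c$ of $\omega$, a stronger uniform claim that specializes to the desired bound. For $(a,b,c) \in \ZZ_{\ge 0}^3$ with $a+b+c=k$, set
\[
\Psi(a,b,c) := \int_{\PP^k} (-v_n)\, S_n^a \wedge T^b \wedge \omega^c,
\]
which is nonnegative since $v_n \le 0$. I will show that there exists $C>0$ independent of $n$ and of $\|u\|_\infty$ with $\Psi(a,b,c) \le C\lambda^{n(1-1/2^c)}$ for every valid triple; the proposition is the specialization $(a,b,c)=(i-1,p-i,k-p+1)$ divided by $\lambda^n$. For the base $c=0$, the invariance relations $S_n^a = \lambda^{-na}(f^n)^*\omega_u^a$ and $T^{k-a} = \lambda^{-n(k-a)}(f^n)^* T^{k-a}$ together with $\int(\phi\circ f^n)(f^n)^*\nu = \lambda^{kn}\int\phi\,\nu$ for top-degree currents $\nu$ reduce
\[
\Psi(a,k-a,0) = \int_{\PP^k} (-\tilde u)\, \omega_u^a \wedge T^{k-a}.
\]
Since $-\tilde u \le -u + \|G\|_\infty$ and $u\le 0$, Theorem~\ref{thm:cegrell-ineq} applied to the canonical truncations $u_L=\max\{u,-L\}$ bounds this by a multiple of $\int(-u_L)\omega_{u_L}^k$ plus a $\|G\|_\infty$-term; Theorem~\ref{thm:BT-convergence} together with the $\cE^1$ hypothesis shows $\int(-u_L)\omega_{u_L}^k \to \int(-u)\langle\omega_u^k\rangle < \infty$, giving the base bound uniformly in $\|u\|_\infty$.

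For the inductive step, fix $c \ge 1$, assume the claim at $c-1$, and put $\Omega := S_n^a \wedge T^b \wedge \omega^{c-1}$, a positive closed $(k-1,k-1)$-current with bounded quasi-potential. From $\omega - T = -dd^c G$ and two Stokes integrations against the closed $\Omega$,
\[
\Psi(a,b,c) - \Psi(a,b+1,c-1) = \int v_n\, dd^c G \wedge \Omega = -\int dv_n \wedge d^c G \wedge \Omega,
\]
and Cauchy--Schwarz yields
\[
\Big|\int dv_n \wedge d^c G \wedge \Omega\Big|^2 \le \Big(\int dv_n \wedge d^c v_n \wedge \Omega\Big) \Big(\int dG \wedge d^c G \wedge \Omega\Big).
\]
The second factor is at most $2\|G\|_\infty$ using $dd^c G = T - \omega$ and one more integration by parts. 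For the first, $dd^c v_n = \lambda^n(S_n - T)$ gives
\[
\int dv_n \wedge d^c v_n \wedge \Omega = \lambda^n\big(\Psi(a+1,b,c-1) - \Psi(a,b+1,c-1)\big) \le \lambda^n\,\Psi(a+1,b,c-1),
\]
which by the inductive hypothesis is at most $C\lambda^{n(2-1/2^{c-1})}$. Taking square roots and noting $\tfrac12(1+(1-1/2^{c-1})) = 1-1/2^c$, then adding the inductive bound $\Psi(a,b+1,c-1) \le C\lambda^{n(1-1/2^{c-1})} \le C\lambda^{n(1-1/2^c)}$, closes the induction.

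The main technical point is the justification of the integration-by-parts formulas and of Cauchy--Schwarz for the bounded quasi-psh $v_n$ and H\"older continuous $G$ tested against the positive closed current $\Omega$; this is handled by standard approximation of $v_n$ and $G$ by smooth decreasing sequences in $\psh(\PP^k,\omega)$ and passing to the limit in Bedford--Taylor wedge products, which is licit because both $v_n$ and $G$ are bounded. A secondary point, already accounted for above, is the uniformity of the base estimate in $\|u\|_\infty$, which rests on Theorem~\ref{thm:BT-convergence} and the $\cE^1$ condition.
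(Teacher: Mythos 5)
Your proof is correct and follows essentially the same route as the paper's: the identity $\omega = T - dd^cG$ to trade $\omega$ for $T$, Cauchy--Schwarz against the gradient bound $\int dG\wedge d^cG\wedge\Omega \lesssim \|G\|_\infty$, the inequality $\int(-v_n)\,dd^cv_n\wedge\Omega \le \lambda^n\Psi(a+1,b,c-1)$ from dropping the $T$-term, and the base case via pull-back invariance plus the Cegrell-type inequality under the $\cE^1$ hypothesis. Your reorganization as a single induction on the $\omega$-exponent $c$ with the unnormalized quantity $\Psi=\lambda^nF$ is just cleaner bookkeeping for what the paper spreads over Lemma~\ref{lem:induction-step}, Proposition~\ref{prop:initial_measure_estimate} and Corollary~\ref{cor:bound-final}, and yields the same exponent $1/2^{k-p+1}$.
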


We first prove several lemmas. 

\begin{lem} 
\label{lem:bound-gradient-G}
We have for $0\leq i \leq p-1$
\[\notag\begin{aligned}
	\int_{\PP^k} dG \wed d^c G \wed S_n^i\wed T^{p-i-1}\wed \omega^{k-p} 
&\leq		\|G\|_\infty,
\end{aligned}\]
where $\|\cdot\|_\infty := \sup_X |\cdot|$ denotes the uniform norm.
\end{lem}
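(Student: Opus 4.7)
The plan is a short integration-by-parts argument based on the identity
\[\notag
\tfrac{1}{2} dd^c(G^2) = G\, dd^c G + dG \wed d^c G,
\]
combined with the defining relation $dd^c G = T - \omega$. Since the running reduction allows us to assume that $u$ is bounded, the current $S_n = \lambda^{-n}(f^n)^*\omega_u$ has bounded quasi-potential, and the Green potential $G$ is H\"older continuous (in particular bounded). Hence all wedge products appearing below are well defined in the Bedford--Taylor sense, and the closed positive $(p-1, p-1)$-current
\[\notag
R := S_n^i \wed T^{p-i-1} \wed \omega^{k-p}
\]
is meaningful.

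Rewriting using $dd^c G = T - \omega$,
\[\notag
dG \wed d^c G \wed R = \tfrac{1}{2} dd^c(G^2) \wed R - G\,T\wed R + G\,\omega\wed R.
\]
First I would observe that Stokes' theorem on the compact manifold $\PP^k$ (valid since $R$ is closed with bounded potentials and $G^2$ is continuous) gives
\[\notag
\int_{\PP^k} dd^c(G^2)\wed R = 0.
\]
Integrating the displayed identity over $\PP^k$ then yields
\[\notag
\int_{\PP^k} dG \wed d^c G \wed R = \int_{\PP^k}(-G)\, S_n^i \wed T^{p-i}\wed \omega^{k-p} - \int_{\PP^k}(-G)\, S_n^i \wed T^{p-i-1}\wed \omega^{k-p+1}.
\]

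Since $G\le 0$ on $\PP^k$, both integrals on the right are integrals of the non-negative function $-G$ against positive Radon measures, so the subtracted term is non-negative and may be discarded. Bounding $-G \leq \|G\|_\infty$ pointwise in the remaining term gives
\[\notag
\int_{\PP^k} dG \wed d^c G \wed R \leq \|G\|_\infty \int_{\PP^k} S_n^i \wed T^{p-i}\wed \omega^{k-p}.
\]
Finally, $S_n$, $T$, and $\omega$ are positive closed $(1,1)$-currents of unit mass on $\PP^k$, all cohomologous to $\omega$, so the mass of their Bedford--Taylor product is $\int_{\PP^k}\omega^k = 1$, completing the bound.

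The computation itself is routine; the only real care needed is to verify that the integration by parts goes through for the non-smooth potentials involved. This is standard Bedford--Taylor theory once one notes that $G$ is H\"older continuous and, under our reduction, $S_n$ has a bounded quasi-potential, which is the main and essentially only technical point.
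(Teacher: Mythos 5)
Your proposal is correct and follows essentially the same route as the paper: integrate by parts to replace $dG\wed d^cG$ by $-G\,dd^cG$, use $dd^cG=T-\omega$, discard the non-negative subtracted term using $G\le 0$, bound by $\|G\|_\infty$, and conclude by the cohomological mass computation. Your explicit use of the identity $\tfrac12 dd^c(G^2)=G\,dd^cG+dG\wed d^cG$ and the remark on Bedford--Taylor regularity just make precise the integration by parts that the paper performs in one line.
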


\begin{proof}
Set
\[\notag
	Z:= S_n^i\wed T^{p-i-1}\wed \omega^{k-p}.
\]
Then, by integration by parts,
\[\notag\begin{aligned}
	\int_{\PP^k} dG \wed d^c G \wed Z 
&=	 \int_{\PP^k} - G dd^c G \wed Z \\
&=	\int_{\PP^k} -G  \; Z \wed T + \int_{\PP^k} G\;  Z \wed  \omega
\end{aligned}\]
Since $G\leq 0$ and it is H\"older continuous on $X$, we have
\[\notag\begin{aligned}
\int_{\PP^k} dG \wed d^c G \wed Z 
&\leq		\|G\|_\infty \int_{\PP^k} Z \wed T \\
&= \|G\|_\infty \int_{\PP^k} \omega^k.
\end{aligned}\]
The last equality follows from a cohomological argument.
We complete the proof.
\end{proof}

We will use the following result inductively to get the proposition.

\begin{lem}
\label{lem:induction-step}  We have for $0\leq i \leq p-1$,
\[\notag\begin{aligned}
&	\int_{\PP^k} \frac{(-v_n)}{\lambda^n} S_n^i\wed T^{p-i-1}\wed \omega^{k-p+1} \\
&\leq		\int_{\PP^k} \frac{(-v_n)}{\lambda^n} \wed S_n^i\wed T^{p-i}\wed \omega^{k-p} \\
&\quad + \|G\|_\infty^{1/2}\left(\int_{\PP^k} \frac{(-v_n)}{\lambda^n}  S_n^{i+1}\wed T^{p-i-1}\wed \omega^{k-p} \right)^\frac{1}{2}.
\end{aligned}\]
\end{lem}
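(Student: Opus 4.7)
\textbf{Proof plan for Lemma \ref{lem:induction-step}.} The key identity to exploit is $\omega = T - dd^c G$, applied to one factor of $\omega$ on the left-hand side. Writing $Z := S_n^i \wed T^{p-i-1} \wed \omega^{k-p}$ (a closed positive current of bidegree $(k-1,k-1)$), we have
\[\notag
\int_{\PP^k} \frac{-v_n}{\lambda^n}\, Z \wed \omega
= \int_{\PP^k} \frac{-v_n}{\lambda^n}\, Z \wed T
- \int_{\PP^k} \frac{-v_n}{\lambda^n}\, Z \wed dd^c G.
\]
The first term on the right is exactly the first term on the right-hand side of the lemma, so the work is to control the second term. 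Since $v_n$ is bounded (by the running assumption) and $G$ is H\"older continuous, integration by parts on the closed K\"ahler manifold $\PP^k$ against the closed positive current $Z$ yields
\[\notag
-\int_{\PP^k} \frac{-v_n}{\lambda^n}\, dd^c G \wed Z
= \int_{\PP^k} d\!\left(\frac{-v_n}{\lambda^n}\right) \wed d^c G \wed Z.
\]

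Next I would apply the Cauchy--Schwarz inequality for the positive symmetric bilinear form $(\alpha,\beta)\mapsto \int d\alpha\wed d^c\beta\wed Z$ on bounded quasi-psh functions. This gives
\[\notag
\left(\int d\!\left(\tfrac{-v_n}{\lambda^n}\right) \wed d^c G \wed Z\right)^{\!2}
\leq \left(\int d\!\left(\tfrac{-v_n}{\lambda^n}\right) \wed d^c\!\left(\tfrac{-v_n}{\lambda^n}\right) \wed Z\right)\left(\int dG\wed d^c G \wed Z\right).
\]
Lemma \ref{lem:bound-gradient-G} bounds the second factor by $\|G\|_\infty$. For the first factor, integrate by parts once more using that $dd^c(v_n/\lambda^n)=S_n-T$:
\[\notag
\int d\!\left(\tfrac{-v_n}{\lambda^n}\right) \wed d^c\!\left(\tfrac{-v_n}{\lambda^n}\right) \wed Z
= -\int \tfrac{-v_n}{\lambda^n}\, dd^c\!\left(\tfrac{v_n}{\lambda^n}\right) \wed Z
= \int \tfrac{-v_n}{\lambda^n}\, S_n\wed Z - \int \tfrac{-v_n}{\lambda^n}\, T\wed Z.
\]
Since $-v_n/\lambda^n \geq 0$ (after normalizing $\sup u=0$, so $v_n\leq C_0$ up to a uniform shift that does not affect the conclusion) and $T\wed Z$ is a positive measure, the last integral is nonnegative, so the first factor is bounded above by $\int \frac{-v_n}{\lambda^n}\, S_n^{i+1}\wed T^{p-i-1}\wed \omega^{k-p}$.

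Combining these steps gives the stated inequality. The step most requiring care is the integration-by-parts and Cauchy--Schwarz for quantities involving $G$ and $v_n$: both are bounded, but the wedge products $S_n^i\wed T^{p-i-1}$ are only closed positive currents, not smooth. I would handle this by the standard Bedford--Taylor regularization: approximate $G$ and $v_n$ by decreasing sequences of smooth (or bounded) quasi-psh functions, perform the manipulations in the smooth setting where the identities and the Cauchy--Schwarz inequality are classical, and pass to the limit using the monotone convergence and continuity properties of the non-pluripolar/Bedford--Taylor products that are already invoked in Theorem \ref{thm:BT-convergence} and Theorem \ref{thm:cegrell-ineq}. Sign conventions need a final check: the fact that $-v_n/\lambda^n$ is effectively nonnegative after the normalization is what makes the dropped term $-\int \frac{-v_n}{\lambda^n} T\wed Z$ helpful rather than harmful.
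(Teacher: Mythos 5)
Your argument is essentially the same as the paper's proof: the same splitting $\omega = T - dd^c G$ applied to one factor of $\omega$, the same integration by parts and Cauchy--Schwarz step with the factor $\int dG\wed d^cG\wed Z$ controlled by Lemma \ref{lem:bound-gradient-G}, and the same final move of writing the gradient integral of $v_n$ as $\int \frac{-v_n}{\lambda^n}(S_n-T)\wed Z$ and discarding the nonnegative term $\int \frac{-v_n}{\lambda^n}T\wed Z$. Two minor points: there is a sign slip in your intermediate expression $-\int \frac{-v_n}{\lambda^n}\, dd^c(v_n/\lambda^n)\wed Z$ (the first and third members of that chain are consistent, the middle one is not), and no extra normalization of $v_n$ is needed since $v_n=\tilde u\circ f^n\leq 0$ already follows from the definition $\tilde u = u-G-\sup_{\PP^k}(u-G)$.
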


\begin{proof}
As previously, we set $Z:= S_n^i\wed T^{p-i-1}\wed \omega^{k-p}.$
Then,
\[\label{eq:sum-ab}
\begin{aligned}
	\int_{\PP^k} (-v_n) Z \wed  \omega
= 	\int_{\PP^k} (-v_n) T \wed Z +	 \int_X v_n dd^c G \wed Z. 
\end{aligned}\]
The second term of the right hand side, by integration by parts and Cauchy-Schwarz's inequality, is estimated as follows:
\[\label{eq:sum-b}\begin{aligned}
	\left|\int_{\PP^k} v_n dd^c G \wed Z\right| 
&= 	\left|\int_{\PP^k} dv_n \wed d^c G  \wed Z\right|  \\
&\leq		\left(\int_{\PP^k} dv_n \wed d^c v_n \wed Z  \right)^\frac{1}{2}
	\left(\int_{\PP^k} dG \wed d^c G \wed Z\right)^\frac{1}{2} \\
&=		\left(\int_{\PP^k} (-v_n) dd^c v_n \wed Z\right)^\frac{1}{2} \left(\int_{\PP^k} dG \wed d^c G \wed Z \right)^\frac{1}{2}. 
\end{aligned}
\]
Lemma~\ref{lem:bound-gradient-G} implies that the last factor on the right hand side is bound by $\|G\|_\infty^{1/2}$. We now focus on the first integral on the right hand side. From \eqref{eq:pull-back-S}, we have
\[\label{eq:sum-b1}
\begin{aligned}
	\int_{\PP^k} (-v_n) dd^c v_n \wed Z  = 	
	\int_{\PP^k} (-v_n) (f^n)^* \omega_u \wed Z  + \int_{\PP^k} v_n (f^n)^* T \wed Z.
\end{aligned}\]
Recall that
$
	v_n = \tilde u\circ f^n \leq 0.
$
We conclude that
\[\label{eq:sum-b0}
	\int_{\PP^k}(-v_n) dd^c v_n \wed Z \leq \int_{\PP^k} (-v_n) \cdot (f^n)^*\omega_u \wed Z.
\]
Combining \eqref{eq:sum-ab}, \eqref{eq:sum-b}, \eqref{eq:sum-b1} and \eqref{eq:sum-b0} we have
\[\notag\label{eq:sum-cd} \begin{aligned}
	\int_{\PP^k} (-v_n) Z\wed \omega
&\leq		\int_{\PP^k} (-v_n) T \wed Z \\
&\quad + \|G\|_\infty^{1/2}\left(\int_{\PP^k} (-v_n) (f^n)^*\omega_u  \wed Z \right)^\frac{1}{2}. 
\end{aligned}\]
Dividing both sides of the inequality by $\lambda^n$, we get the desired inequality.
\end{proof}

For notational convenience, we write
\[\notag
F(\alpha, \beta, \gamma):= \int_{\PP^k} \frac{(-v_n)}{\lambda^n} S_n^\alpha\wed T^\beta\wed \omega^\gamma.
\]
From the invariance of $T$, we have the following proposition.
\begin{prop}\label{prop:initial_measure_estimate}
	For a positive Radon measure $\mu$ on $\PP^k$, we have
	\[\label{eq:pull-back-measure}
	\int_{\PP^k} f^* (d\mu) = \lambda^k \int_{\PP^k} d\mu.
	\]
	In particular, there is a constant $C_1>0$ independent of $n$ such that for every $1\leq i\leq p-1$ and $0\leq j \leq k-p+1$,
	\[\notag\begin{aligned}
	F(i+j, k-i-j,0) = 	\frac{1}{\lambda^n} \int_{\PP^k} (-\tilde u) \omega_{u}^{i+j} \wed T^{k-i-j} 	\leq		\frac{C_1}{\lambda^n}
	\end{aligned}\]
\end{prop}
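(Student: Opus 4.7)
The proof falls into three steps: establishing the measure pull-back identity, reducing $F(i+j, k-i-j, 0)$ to a single integral via this identity, and controlling the resulting integral uniformly.

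First, the identity $\int_{\PP^k} f^*(d\mu) = \lambda^k \int_{\PP^k} d\mu$ for a positive Radon measure $\mu$ is a cohomological statement. Any positive Radon measure on $\PP^k$ is automatically a closed positive $(k,k)$-current, and $f^*$ acts on $H^{k,k}(\PP^k,\RR)\cong\RR$ by multiplication by the topological degree $\lambda^k$; the total masses of $\mu$ and $f^*\mu$ therefore differ exactly by this factor.

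Next, under the running assumption that $u$ is bounded, all wedge products appear in the Bedford--Taylor sense and commute with $f^*$. Using $(f^n)^* T = \lambda^n T$ and the definition $S_n = \lambda^{-n}(f^n)^*\omega_u$, one has
\[\notag
S_n^{i+j} \wed T^{k-i-j} = \lambda^{-nk}(f^n)^*\left[\omega_u^{i+j} \wed T^{k-i-j}\right].
\]
Combining this with $v_n = \tilde u\circ f^n$ and the identity $(\varphi\circ f^n)(f^n)^*\nu = (f^n)^*(\varphi\,\nu)$ for bounded Borel $\varphi$ and positive Radon $\nu$, I would write
\[\notag
F(i+j, k-i-j, 0) = \lambda^{-n(k+1)} \int_{\PP^k}(f^n)^*\left[(-\tilde u)\,\omega_u^{i+j} \wed T^{k-i-j}\right],
\]
and then apply the first step to the measure in brackets to recover the claimed equality with factor $\lambda^{-n}$.

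For the uniform bound, use $u - C_0 < \tilde u$ and the cohomological identity $\int_{\PP^k}\omega_u^{i+j} \wed T^{k-i-j} = 1$ to reduce the problem to controlling $\int_{\PP^k}(-u)\,\omega_u^{i+j}\wed T^{k-i-j}$. Apply Theorem~\ref{thm:cegrell-ineq} with $\psi_0 = u$ and with $i+j$ copies of $u$ and $k-i-j$ copies of $G$ distributed among $\psi_1, \ldots, \psi_k$, obtaining the upper bound $C\max\bigl(\int_{\PP^k}(-u)\,\omega_u^k,\ \int_{\PP^k}(-G)\,\omega_G^k\bigr)$. The second quantity is finite because $G$ is H\"older continuous, hence bounded.

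The main obstacle will be uniformity with respect to $\|u\|_\infty$, which is crucial because the statement of Theorem~\ref{thm:main} was reduced to the bounded case via truncation $u_L = \max\{u,-L\}$. That the first quantity $\int_{\PP^k}(-u_L)\omega_{u_L}^k$ stays bounded independently of $L$ is exactly the $\cE^1$ hypothesis combined with Theorem~\ref{thm:BT-convergence}: the truncated energies converge to the finite non-pluripolar energy $\int_{\PP^k}(-u)\langle\omega_u^k\rangle$, which is finite precisely because $u\in\cE^1(\PP^k,\omega)$. Putting everything together produces a constant $C_1 > 0$ independent of both $n$ and the truncation parameter.
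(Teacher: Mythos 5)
Your proposal is correct and follows essentially the same route as the paper: the topological-degree identity for measures, the change of variables turning $F(i+j,k-i-j,0)$ into $\lambda^{-n}\int_{\PP^k}(-\tilde u)\,\omega_u^{i+j}\wedge T^{k-i-j}$, and Theorem~\ref{thm:cegrell-ineq} combined with the $\cE^1$ hypothesis for the uniform bound. If anything, you are slightly more careful than the paper in two spots --- replacing $\tilde u$ by $u$ up to the constant $C_0$ before invoking the Cegrell-type inequality, and explicitly citing Theorem~\ref{thm:BT-convergence} to get uniformity of $\int_{\PP^k}(-u_L)\,\omega_{u_L}^k$ in the truncation parameter $L$.
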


\begin{proof} The first assertion is well known. Since $f^* T = dT$, we can write
	\[\notag\begin{aligned}
	&	\int_{\PP^k} \frac{(-v_n)}{\lambda^n} S_n^{i+j} \wed T^{k-i-j} \\
	&= 	\int_{\PP^k} \frac{-\tilde u \circ f^n}{\lambda^n} \left(\frac{1}{\lambda^n} (f^n)^*\omega_u\right)^{i+j} \wed T^{k-i-j} \\
	&=	\int_{\PP^k} \frac{ (f^n)^* (-\tilde u)}{\lambda^n} \left(\frac{1}{\lambda^n} (f^n)^*\omega_u\right)^{i+j} \wed \left(\frac{1}{\lambda^n} (f^n)^*T\right)^{k-i-j} \\
	&=	\frac{1}{\lambda^n}\int_{\PP^k} (-\tilde u) \omega_u^{i+j} \wed T^{k-i-j},
	\end{aligned}\]	
	where we used \eqref{eq:pull-back-measure} for the third equality. Furthermore, 
	using the fact that $u \in \cE^1(\PP^k,\omega)$ and Theorem~\ref{thm:cegrell-ineq} we get that the last integral on the right hand side is bounded by
	\[\notag
	C_1:=C \max \left(\int_{\PP^k} (-\tilde u) \omega_u^k, \int_{\PP^k} (-G) T^k \right).
	\]
	for some $C>0$. Therefore,
	\[\notag
	F(i+j, k-i-j,0) \leq \frac{C_1}{\lambda^n}.
	\]
	Thus, we are done.
\end{proof}

We use Lemma \ref{lem:induction-step} to estimate the integral of the form $F$ with $\gamma=k-p+1$ in terms of $F$'s with $\gamma=0$.
\begin{cor} \label{cor:bound-final}
There exists a constant $C>0$ such that for every $1\leq i\leq p-1$ and $0\leq j \leq k-p+1$,
\[\notag
	F(i,p-i-1,k-p+1) \leq		
	C \sum_{j=0}^{k-p+1} \left[ F(i+j, k-i-j,0)\right]^\frac{1}{2^j}.
\]
\end{cor}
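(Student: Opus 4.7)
The plan is to iterate the induction step of Lemma~\ref{lem:induction-step} a total of $k-p+1$ times, once for each factor of $\omega$ that needs to be absorbed. Each iteration trades one $\omega$ either for an extra $T$ (the first term on the right-hand side of the lemma, call it a ``$T$-choice'') or for an extra $S_n$ together with a Cauchy--Schwarz-induced square root (the second term, call it an ``$S$-choice'').

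As a preliminary I would observe that the proof of Lemma~\ref{lem:induction-step} in fact yields the following slightly more general statement: for nonnegative integers $\alpha,\beta\geq 0$ and $\gamma\geq 1$ with $\alpha+\beta+\gamma=k$,
\[\notag
F(\alpha,\beta,\gamma)\;\leq\; F(\alpha,\beta+1,\gamma-1)\;+\;\|G\|_\infty^{1/2}\,F(\alpha+1,\beta,\gamma-1)^{1/2}.
\]
The proof is verbatim the same: set $Z:=S_n^\alpha\wed T^\beta\wed \omega^{\gamma-1}$, substitute $\omega=T-dd^cG$ in $\int(-v_n)Z\wed\omega$, integrate by parts, apply Cauchy--Schwarz, and invoke the obvious analogue of Lemma~\ref{lem:bound-gradient-G}. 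That analogue only requires $Z$ to be a positive closed $(k-1,k-1)$-current of unit mass, which follows from a cohomological computation in $\PP^k$ since $S_n,T,\omega$ are unit-mass positive closed $(1,1)$-currents.

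Now I iterate starting from $F(i,p-i-1,k-p+1)$, applying the generalised lemma at each step and the inequality $\sqrt{a+b}\leq\sqrt{a}+\sqrt{b}$ whenever a nested square root appears. Every branch of the resulting binary tree terminates after $k-p+1$ steps at a term with $\gamma=0$: the first index $\alpha$ increases by $1$ at every $S$-choice, the second index $\beta$ increases by $1$ at every $T$-choice, and $\gamma$ decreases by $1$ at every step. A branch making exactly $j$ $S$-choices therefore contributes a summand of the form $c\cdot F(i+j,k-i-j,0)^{1/2^j}$, the exponent $1/2^j$ recording the $j$ nested square roots along the branch and the constant $c$ being a bounded power of $\|G\|_\infty^{1/2}$. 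There are $\binom{k-p+1}{j}$ such branches, and only finitely many branches in total, so collecting them and absorbing the combinatorial and $\|G\|_\infty$ factors into a single constant $C>0$ yields the claimed inequality.

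The argument is essentially combinatorial; all analytic content sits in the generalised induction step. The only thing that has to be watched carefully, and the place I would flag as the main obstacle, is that $\sqrt{a+b}\leq\sqrt{a}+\sqrt{b}$ must be applied at every level of nested square roots so that the final expression really is a genuine sum of leaf terms rather than a deeply nested root, and that doing so preserves the exponent $1/2^j$ on the leaf coming from a branch with $j$ $S$-choices. This bookkeeping can be made rigorous either by explicit expansion on the tree or, equivalently, by induction on $\gamma$, viewing the statement of the corollary as the inductive hypothesis at its own level.
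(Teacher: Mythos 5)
Your proposal is correct and follows essentially the same route as the paper: the paper's proof simply applies Lemma~\ref{lem:induction-step} once and says ``repeating this until $\gamma$ becomes $0$'' yields the claim. You have merely made explicit the bookkeeping the paper leaves implicit --- the need for the induction step with general exponents $(\alpha,\beta,\gamma)$, the subadditivity $(a+b)^{1/2^m}\leq a^{1/2^m}+b^{1/2^m}$ to flatten the nested roots, and the count of $\binom{k-p+1}{j}$ branches with $j$ square-root steps --- all of which is sound.
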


\begin{proof}
Lemma~\ref{lem:induction-step} gives us that
\[\notag
	F(i,p-i-1,k-p+1) \leq F(i, p-i, k-p) + \|G\|_\infty^{1/2} \left[F(i+1, p-i-1, k-p)\right]^\frac{1}{2}.
\]
Observe that the third argument $\gamma$ in $F(\alpha, \beta, \gamma)$ decreases by $1$. Repeating this until $\gamma$ becomes $0$, we finally arrive at
\[\notag
	F(i,p-i-1,k-p+1)\leq C \sum_{j=0}^{k-p+1} \left[ F(i+j, k-i-j,0)\right]^\frac{1}{2^j}
\]
for some $C>0$. Thus, we have finished the proof.
\end{proof}

Now, we are ready to conclude the proposition and the theorem.
\begin{proof}[Proof of Proposition~\ref{prop:each-term}]
By Proposition \ref{prop:initial_measure_estimate} and Corollary~\ref{cor:bound-final}, for some uniform constants $C>0$ and $C'>0$, we have
\[\notag\begin{aligned}
	F(i,p-i-1, k-p+1) 
&\leq 	C \sum_{j=0}^{k-p+1}\left( \frac{1}{\lambda^n}\right)^\frac{1}{2^{j}} \\
&\leq		C' \left( \frac{1}{\lambda^n}\right)^\frac{1}{2^{k-p+1}}.
\end{aligned}\]
This is what we are after.
\end{proof}

\begin{proof}[End of proof of Theorem~\ref{thm:main}] Let $\psi$ be a smooth of bidegree $(k-p,k-p)$. 
We need to show that
\[\notag\label{eq:ex-co-b}
	\int_{\PP^k} \left[ \lambda^{-pn} (f^n)^* \omega_u^p - T^p\right]\wed \psi \leq C' \|\psi\|_{\cC^2}\lambda^\frac{-n}{2^k}
\]
for an uniform constant $C'>0$ independent of $\|u\|_\infty$. Recall that we are assuming that $u$ is bounded. Telescoping series yields
\[\notag
	\int_{\PP^k} \left[ \lambda^{-pn}(f^n)^*\omega_u^p -T^p \right]\wed \psi = \frac{1}{\lambda^n}  \int_{\PP^k} \sum_{i=1}^{p-1}dd^c v_n \wed S^i \wed T^{p-i} \wed  \psi
\]
We have by integration by parts, 
\[\notag\begin{aligned}
\frac{1}{\lambda^n}  \int_{\PP^k} \sum_{i=0}^{p-1}dd^c v_n \wed S^i \wed T^{p-i-1} \wed  \psi = \int_{\PP^k} \frac{v_n}{\lambda^n} \sum_{i=0}^{p-1} S^i \wed T^{p-i-1} \wed dd^c\psi.
\end{aligned}\]
Since $dd^c \psi$ is a smooth $(k-p+1, k-p+1)$-form. There is a positive constant $C_2>0$ such that
\[\notag
	- C_2 \|\psi\|_{\cC^2}\omega^{k-p+1} \leq dd^c\psi \leq C_2 \|\psi\|_{\cC^2} \omega^{k-p+1}.
\]
Therefore, from the negativity of $v_n$, we have
\[\notag\begin{aligned}
&	0\geq \int_{\PP^k} \frac{v_n}{\lambda^n} \sum_{i=1}^{p-1} S^i \wed T^{p-i} \wed dd^c\psi \\
&\geq 	2C_2 \|\psi\|_{\cC^2} \int_{\PP^k} \frac{v_n}{\lambda^n} \sum_{i=0}^{p-1} S^i \wed T^{p-i-1} \wed \omega^{k-p+1}
\end{aligned}\]
It follows from Propostion~\ref{prop:each-term} that the last integral is bound from below by
\[\notag
	 -2CC_2 \|\psi\|_{\cC^2} \left(\frac{1}{\lambda^n}\right)^\frac{1}{2^{k-p+1}}
\]
where the constant $C>0$ is from Proposition \ref{prop:each-term}. So, we have finished the proof of the theorem.
\end{proof}

\end{document}